\newtheorem{theorem}{Theorem}[section]
\newtheorem{lemma}[theorem]{Lemma}
\newtheorem{example}{Example}[section]
\title{A Classification of Autoparatopisms of Latin Cubes} 
\author{
  Vindula Kumaranayake\\
  Department of Mathematics\\
  University of Colombo\\
  Colombo 03, 00300 \\
  Sri Lanka\\
  \texttt{2015s15193@stu.cmb.ac.lk} \\
}
\begin{document}
\maketitle

\begin{abstract}
A paratopism is an action on a Latin hypercube of dimension d and order n which is an element of the wreath product $S_n \wr S_{d+1} $. A paratopism is said to be an autoparatopism if there is at least one Latin hypercube which is mapped to itself under the action of the paratopism. In this paper we classify autoparatopisms of Latin cubes given $d = 3$ and $n \in \mathbb{Z^+}$, upto the conjugacy in $S_n \wr S_4 $. In order to achieve this objective, we prove that given an autoparatopism $\sigma \in S_n \wr S_{d+1}$, all the conjugates of $\sigma$ are autoparatpisms. Also, an important condition is presented, which states that the cycle structure of the $\delta$ in the element $\sigma = (\alpha_1,\alpha_2,\alpha_3,\alpha_4;\delta)$ of $S_n \wr S_4$ determines the conjugacy. 
\end{abstract}

\keywords{Autoparatopism \and Conjugates \and Latin cube \and Latin Square \and Paratopism }

\section{Introduction}

\subsection{Background}

A \textit{Latin square} of order n (where n $\in \mathbb{Z^+}$) is an $n\times n$ array which contains n distinct symbols, such that each symbol occurs exactly once in each row and each column as defined in \cite{keedwelldenes} 

The word "Latin Square" was assigned to above defined $n\times n$ arrays because the Swiss mathematician Leonhard Euler used Latin characters as symbols in his mathematical papers. Korean mathematician Choi Seok-jeong was the first person to publish an example of a Latin Square in 1700, (67 years prior to Euler).

The motivation was gained from \cite{mendis2017autoparatopisms} to extend its results to cubes. Primarily it was questioned that what results regarding autoparatopisms on Latin squares are true for Latin cubes.

Applications of Latin squares, Latin Cubes and Latin hypercubes can be found in areas such as Statistics (design of experiments), Mathematics (generalizations of groups, and operations in to n-ary sense), Coding (error correcting codes) and as Mathematical puzzles.

\subsection{Basic notations and definitions}
We adopt the definitions regarding to Latin hypercubes from \cite{mckay2008census} and change them appropriately for cubes as in the following manner. Let $n \in \mathbf{Z^+}$. We denote the set \{1,2,...,n\} by [n], and $[n]\times[n]\times[n]$ by $[n]^3$. Also an element $(i_1,i_2,i_3) \in [n]^3$ is denoted by $\vec{v}$. A 3-dimensional array of $n^3$ cells is said to be a \textit{cube of order n} if each entry is assigned by an element of [n]. The entry at $\vec{v}$ of a cube C is denoted by $C(\vec{v})$ or $C(i_1,i_2,i_3)$. Let C be a cube of order n, Let $i_1,i_2,...i_{k-1},i_{k+1},...,i_3 \in [n]$ be fixed with $1\le k \le 3$. Then the set $\{(i_1,i_2,...i_{k-1},i_k,i_{k+1},...,i_3): i_k \in [n] \}$ is called a \textit{line} of the cube C. Note that the cube $C$ has $3n^2$ lines. $C$ is said to be a \textit{Latin Cube} if for every line $l$ of $C$, \{$C(\vec{v}): \vec{v} \in l $\} = $[n]$. The set \{(i,j,k,$C(i,j,k)$) : $i,j,k \in [n]$\} is called the \textit{orthogonal array representation of $C$} and we denote it by $O(C)$.

\subsection{Basic concepts}

The identity in $S_n$ is denoted by $\epsilon$ throughout this paper. ($S_n$ is the symmetric group acting on $[n]$.) Also, the convention that permutations act from right is adopted throughout this paper. Now, we start our discussion on actions on a Latin cube. Let $\theta = (\alpha_1,\alpha_2,\alpha_3,\alpha_4) \in {S_n}^4 $ and $C$ be a Latin cube of order $n$. Now $\theta$ acts on every $(i,j,k,C(i,j,k))$ $\in O(C)$ such that $i$ is permuted by $\alpha_1$, $j$ is permuted by $\alpha_2$, $k$ is permuted by $\alpha_3$, $C(i,j,k)$ is permuted by $\alpha_4$.
Then the resulting Latin cube is denoted by $C^{\theta}$. Therefore, for each $i,j,k \in [n]$, $C^{\theta}$(i,j,k) = $C(i\alpha_1^{-1},j\alpha_2^{-1},k\alpha_3^{-1})\alpha_4$. The map $\theta$ is called an \textit{isotopism} and $C^{\theta}$ is said to be \textit{isotopic} to $C$. \\

\begin{lemma}
$\theta$ is an autotopism of a Latin cube $C$ if and only if $C(i,j,k)\alpha_4 = C(i\alpha_1,j\alpha_2,k\alpha_3)$ for each $i,j,k \in$[n].
\end{lemma}

\begin{proof}
 Suppose $\theta$ is an autotopism of a Latin cube $C$. Now $C(i,j,k) = C^{\theta}(i,j,k)$ for each $i,j,k \in [n]$. That is, $C(i,j,k) = C(i\alpha_1^{-1},j\alpha_2^{-1},k\alpha_3^{-1})\alpha_4$  for each i,j,k$\in$[n]. Without loss of generality, we replace i by $i\alpha_1$, j by $j\alpha_2$, k by $k\alpha_3$ in the above expression. Therefore, $C(i,j,k)\alpha_4 = C(i\alpha_1,j\alpha_2,k\alpha_3)$ for each i,j $\in$ [n].
Conversely suppose $C(i,j,k)\alpha_4 = C(i\alpha_1,j\alpha_2,k\alpha_3)$ for each i,j,k $\in$ [n]. Now, by replacing i by $i\alpha_1^{-1}$, j by $j\alpha_2^{-1}$ and k by $k\alpha_3^{-1}$, we have  $C(i,j,k) = C(i\alpha_1^{-1},j\alpha_2^{-1},k\alpha_3^{-1})\alpha_4$ for each i,j,k $\in$ [n]. Hence, $C = C^{\theta}$. Therefore, $\theta$ is an autotopism of $C$.
\end{proof} 

If $\theta = (\alpha,\alpha,\alpha,\alpha) \in {S_n}^4 $ with $C^{\theta} = C$ then, $\alpha$ is said to be an \textit{automorphism} of Latin cube $C$ of order n. 

Now we define $P_n$ to be the wreath product of ${{S_n}\wr{S_4}}$. Let $\sigma \in P_n$ and denote $\sigma$ as follows. $\sigma = (\alpha_1,\alpha_2,\alpha_3,\alpha_4;\delta)$ where $\alpha_1,\alpha_2,\alpha_3,\alpha_4 \in S_n$ and $\delta \in S_4$. Then, $\sigma$ acts on every $(i,j,k,C(i,j,k))$ $\in O(C)$ for a Latin cube $C$ of order n such that $i,j,k,C(i,j,k)$ are permuted by $\alpha_1,\alpha_2,\alpha_3,\alpha_4$ respectively. Then the resulting 4-tuple $(i\alpha_1,j\alpha_2,k\alpha_3,C(i,j,k)\alpha_4)$ is permuted by $\delta$. Finally resulted Latin cube, by considering permuted 4-tuples by $\delta$ as an orthogonal representation of a new Latin cube is denoted by $C^{\sigma}$. $\sigma$ is said to be a \textit{paratopism}.

\begin{example}
Suppose $(i,j,k, L_c(i,j,k))$ $\in O(C)$ for some Latin cube $C$ of order $n$. If $\delta$=$(1 4)$ then $(i,j,k,C(i,j,k))\sigma$ = $(C(i,j,k)\alpha_4,j\alpha_2,k\alpha_3,i\alpha_1)$. If $\delta$ = $(1 4 3 2) $ then $(i,j,k.C(i,j,k))\sigma$=$(j\alpha_2,k\alpha_3,C(i,j,k)\alpha_4,i\alpha_1)$.\end{example}

If $C$=$C^{\sigma}$ then $\sigma$ is said to be an \textit{autoparatopism} of Latin cube $C$. It can be observed that an isotopism is a special case of an autoparatopism with $\delta$=$\epsilon$.

In this paper, a classification of autoparatopisms of Latin cubes of order n is presented. Autoparatopisms are classified up to conjugacy of autoparatopisms, given the cycle structure of $\delta$ in an autoparatopism $\sigma$. 

\section{The Construction of Mathematical Tools}
\subsection{Cycle Structures}

It is well known that, for each $\alpha \in S_n$, $\alpha$  can be written as a product of disjoint cycles. If there is any cycle of $\alpha$ which is of length 1, it is said to be a fixed point. Set of all the fixed points of $\alpha$ is denoted by Fix$(\alpha)$. The cycle structure of $\alpha$ is denoted by $c_1^{\lambda_1}\cdot c_2^{\lambda_2}\cdot ...\cdot c_m^{\lambda_m}$ where there are $\lambda_i$ cycles of length $c_i$ and $c_1 > c_2 > ... > c_m \ge 1 $. Note that $n = \displaystyle \sum_{i=1}^m{c_i\lambda_i}$. When $\lambda_i = 1$, $c_i^1$ is denoted by just writing $c_i$. If i is a point moved by cycle $C$ in $\alpha$, then it is said that i is in $C$ and denoted by $i \in C$. Length of $C$ (size of the orbit) is denoted by $o(C)$. If $i \in C $ and $o(C)=c$ for a permutation $\alpha$, it is written that $o_{\alpha}(i)=c $.

\begin{example}
Let $\alpha = (1 2 3)(4)(5) \in S_5$. Now $c_1 = 3, \lambda_1 = 1, c_2 = 1, \lambda_2 = 2.$, Now, $2 \in (1 2 3)$ and $o((1 2 3)) = 3$, therefore $o_\alpha(2)=3$.
\end{example} 

As in \cite{mendis2017autoparatopisms}, a permutation in $S_n$ is said to be expressed in its Canonical form if, it is written as a product of disjoint cycles (Which includes 1-cycles corresponding to fixed points.) and the cycles are ordered according to their length, starting with the longest cycle and each c-cycle is of the form $(i,i+1,...,i+c-1)$, (with $i$ being referred as the leading symbol of the cycle.) and, if a cycle with leading symbol $i$ is followed by a cycle with leading symbol $j$, then $i < j$. 

\begin{example} 
$(1 2 3)(4 5)(6), (1 2 3 4 5 6), (1 2)(3 4)(5 6) \in S_6$ 
\end{example}

\subsection{Conjugates}
 $\alpha_1 \in S_n$ is said to be conjugate to $\alpha_2 \in S_n$ if there is a $\beta \in S_n$ such that $\alpha_2 = \beta^{-1}\alpha_1\beta$. We denote $\alpha_1$ being conjugate to $\alpha_2$ by $\alpha_1 \sim \alpha_2$. Also $\sigma_1 \in S_n \wr S_4$ is said to be conjugate to $\sigma_2 \in S_n \wr S_4$ if there is a $\tau \in S_n \wr S_4$ such that $\sigma_2 = \tau^{-1}\sigma_1\tau$. Similarly the notation $\sigma_1 \sim \sigma_2$ is used to denote $\sigma_1$ being conjugate to $\sigma_2$. Note that "$\sim$" defines an equivalence relation on $S_n$ (or $S_n \wr S_4$).

\begin{lemma}
$\alpha_1 \in S_n$ and $\alpha_2 \in S_n$ are conjugate if and only if they are of the same cycle structure.
\end{lemma}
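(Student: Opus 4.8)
The plan is to prove both directions using the fundamental conjugation formula for the symmetric group, adapted to the right-action convention adopted in this paper. The first step is to establish that for any cycle $(a_1\, a_2\, \cdots\, a_k)$ appearing in $\alpha \in S_n$ and any $\beta \in S_n$, one has $\beta^{-1}(a_1\, a_2\, \cdots\, a_k)\beta = (a_1\beta\, a_2\beta\, \cdots\, a_k\beta)$. This follows by direct computation: if $a\alpha = b$, then $(a\beta)(\beta^{-1}\alpha\beta) = (a\alpha)\beta = b\beta$, so conjugation by $\beta$ merely relabels each symbol $i$ by its image $i\beta$ while leaving the cyclic structure intact.

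For the forward direction (conjugate implies same cycle structure), I would suppose $\alpha_2 = \beta^{-1}\alpha_1\beta$, write $\alpha_1$ as a product of disjoint cycles, and apply the formula above to each cycle. This produces a disjoint-cycle decomposition of $\alpha_2$ in which every cycle of length $c$ in $\alpha_1$ is carried to a cycle of length $c$. Since $\beta$ is a bijection of $[n]$, distinct cycles of $\alpha_1$ yield distinct (and still disjoint) cycles of $\alpha_2$, so the multiset of cycle lengths is preserved; hence $\alpha_1$ and $\alpha_2$ share the same cycle structure.

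For the reverse direction, I would assume $\alpha_1$ and $\alpha_2$ have the same cycle structure $c_1^{\lambda_1}\cdots c_m^{\lambda_m}$ and write both permutations as products of disjoint cycles including all $1$-cycles, so that every element of $[n]$ appears exactly once. Aligning the cycles by length gives a pairing of each cycle of $\alpha_1$ with a cycle of $\alpha_2$ of the same length, and I would define $\beta$ by sending the $j$-th symbol of each cycle of $\alpha_1$ to the $j$-th symbol of its paired cycle in $\alpha_2$. Applying the conjugation formula cycle-by-cycle then yields $\beta^{-1}\alpha_1\beta = \alpha_2$, establishing conjugacy.

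The main obstacle is verifying that the map $\beta$ constructed in the reverse direction is a well-defined bijection of $[n]$. This is precisely where the canonical form introduced earlier is useful: by writing both permutations in canonical form (which fixes the ordering of cycles of equal length as well as the leading symbol of each cycle), the pairing of cycles and the position-by-position matching become unambiguous, and since each symbol of $[n]$ occurs in exactly one cycle of each permutation, $\beta$ is automatically a bijection. Once well-definedness is secured, the remaining verification is a routine application of the conjugation formula already established in the first step.
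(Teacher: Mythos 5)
Your proposal is correct and follows essentially the same route as the paper: the forward direction rests on the same relabelling computation $(a\beta)(\beta^{-1}\alpha\beta) = (a\alpha)\beta$, and the reverse direction constructs the same conjugating $\beta$ by pairing cycles of equal length and matching symbols position-by-position. The only difference is that you verify well-definedness and bijectivity of $\beta$ explicitly (via canonical form and the inclusion of $1$-cycles), a point the paper's proof leaves implicit.
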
  
\begin{proof} Suppose $\alpha_1$ and $\alpha_2$ are conjugate in $S_n$. Then there is $\beta \in S_n$ such that $\alpha_2 = \beta^{-1}\alpha_1\beta$. Let $i \in [n]$. Then there is $j \in [n]$ such that $(i)\alpha_1 = j$. Now, $((i)\beta)\alpha_2 = ((i)\beta)\beta^{-1}\alpha_1\beta = ((i)\alpha_1)\beta = (j)\beta$. Thus, if $\alpha_1$ maps $i$ to $j$ then $\alpha_2$ maps $(i)\beta$ to $(j)\beta$. Hence $\alpha_1$ and $\alpha_2$ have the same cycle structure. Conversely suppose both $\alpha_1$ and $\alpha_2$ have the same cycle structure. Now we construct a $\beta \in S_n$ such that for every cycle $(a_1 a_2 ... a_k)$ in $\alpha_1$, we choose a cycle $(b_1 b_2 ... b_k)$ of length k from $\alpha_2$ and map each $a_i$ to $b_i$. Then $\beta^{-1}\alpha_1\beta = \alpha_2$. Hence $\alpha_1 \sim \alpha_2$. 
\end{proof}

The \textit{Hamming distance} dist$(C,C')$ between two Latin cubes $C$ and $C'$ of same order is defined as the cardinality of the set $\{d \in O(C) : d \not\in O(C')\}$. Note that If $\sigma \in S_n \wr S_4 $ then, $\sigma$ is an autoparatopism if and only if dist$(C,C^{\sigma}) = 0$. 

Now we extend \textbf{theorem 2.1} and \textbf{theorem 2.2} of \cite{mendis2016latin} to \textbf{Lemma 2.2:} and \textbf{Theorem 2.2} respectively and appropriately as follows.

\begin{lemma} Suppose $\sigma_1$ and $\sigma_2$ are conjugate in $S_n \wr S_4$. Then, $\sigma_1$ is an autoparatopism if and only if $\sigma_2$ is an autoparatopism. \end{lemma}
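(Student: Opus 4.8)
The plan is to show that conjugation by an element $\tau \in S_n \wr S_4$ sends autoparatopisms to autoparatopisms, using the fact that $\sigma$ is an autoparatopism exactly when some Latin cube is fixed by $\sigma$. The key observation is that the action of $S_n \wr S_4$ on Latin cubes is a genuine group action: for any $\rho, \sigma \in S_n \wr S_4$ and any Latin cube $C$, we have $(C^{\rho})^{\sigma} = C^{\rho\sigma}$ (with the right-action convention adopted in the paper). Granting this, the argument is essentially the standard fact that the stabilizer of a conjugate point is the conjugate of the stabilizer.

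First I would verify, or at least state and briefly justify, the compatibility $(C^{\rho})^{\sigma} = C^{\rho\sigma}$. This is the technical heart of the lemma, because it requires checking that permuting the four coordinate-roles by $\delta$ and then again by a second paratopism composes correctly with the symbol permutations $\alpha_i$; the $\delta$-part permutes which $\alpha_i$ acts on which coordinate, so one must be careful that the wreath-product multiplication in $S_n \wr S_4$ matches the composite action on $O(C)$. I expect this bookkeeping to be the main obstacle: unlike the pure isotopism case in Lemma 2.1, here the $S_4$-component reshuffles the roles of the four entries of each tuple, so the composition law has to respect the twisted multiplication $(\boldsymbol{\alpha};\delta)(\boldsymbol{\beta};\gamma) = (\boldsymbol{\alpha}\cdot\delta(\boldsymbol{\beta});\delta\gamma)$ of the wreath product.

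Once the action property is in hand, the proof is short and symmetric. Suppose $\sigma_1 \sim \sigma_2$, so $\sigma_2 = \tau^{-1}\sigma_1\tau$ for some $\tau \in S_n \wr S_4$. If $\sigma_1$ is an autoparatopism, there is a Latin cube $C$ with $C^{\sigma_1} = C$. Set $C' = C^{\tau}$, which is again a Latin cube since the action preserves the Latin property. Then
\[
(C')^{\sigma_2} = (C^{\tau})^{\tau^{-1}\sigma_1\tau} = C^{\tau\tau^{-1}\sigma_1\tau} = C^{\sigma_1\tau} = (C^{\sigma_1})^{\tau} = C^{\tau} = C',
\]
so $\sigma_2$ fixes the Latin cube $C'$ and is therefore an autoparatopism. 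This establishes the forward implication.

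For the converse, I would simply note that conjugacy is symmetric: since $\sigma_1 \sim \sigma_2$ implies $\sigma_2 \sim \sigma_1$ (as $\sim$ is an equivalence relation, already observed in the paper), the same argument with the roles of $\sigma_1$ and $\sigma_2$ interchanged shows that if $\sigma_2$ is an autoparatopism then so is $\sigma_1$. Hence $\sigma_1$ is an autoparatopism if and only if $\sigma_2$ is, completing the proof. The only place requiring real care is the associativity/compatibility of the action with the wreath-product multiplication; everything else is formal manipulation once that is secured.
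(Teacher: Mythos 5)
Your proof is correct and is essentially the paper's own argument: both conjugate the fixed cube $C$ by $\tau$ and use the composition law $(C^{\rho})^{\sigma} = C^{\rho\sigma}$ to show $C^{\tau}$ is fixed by $\sigma_2 = \tau^{-1}\sigma_1\tau$, with the paper merely phrasing the final equality as $\mathrm{dist}(C^{\tau}, C^{\tau\sigma_2}) = 0$. Your explicit flagging of the action-compatibility with the wreath-product multiplication is a point the paper uses silently, but it does not change the route of the proof.
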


\begin{proof} Since $\sigma_1$ and $\sigma_2$ are conjugate in $S_n \wr S_4$, there is $\tau \in S_n \wr S_4$ such that $\sigma_2 = \tau^{-1}\sigma_1\tau$. Suppose $\sigma_1 $ is an autoparatopism of a Latin cube C. Therefore, $C = C^{\sigma_1}$. Subsequently we have $C^{\tau} = C^{\sigma_1\tau}$ by applying same action on the both sides of the equation. Hence, dist($C^{\tau},C^{\sigma_1\tau})=0$ .In order to show that, $\sigma_2$ is an autoparatopism of the Latin cube $C^{\tau}$ we compute the hamming distance between $C^{\tau}$ and $C^{\tau\sigma_2}$. That is, dist($C^{\tau},C^{\tau\sigma_2})$ = dist($C^{\tau},C^{\tau(\tau^{-1}\sigma_1\tau)})$ = dist($C^{\tau},C^{\sigma_1\tau}) = 0$. Therefore we have the required result. Proving the converse is about just interchanging the roles of $\sigma_1$ and $\sigma_2$. \end{proof}

\begin{theorem} Let $\sigma_1$ = $(\alpha_1,\alpha_2,\alpha_3,\alpha_4;\delta_1) \in S_n\wr S_4$ and $\sigma_2$ = $(\beta_1,\beta_2,\beta_3,\beta_4;\delta_2) \in S_n \wr S_4$. Then, $\sigma_1$ is conjugate to $\sigma_2$ in $S_n\wr S_4$ if and only if there is a length preserving bijection $\eta$ from the cycles of $\delta_1$ to the cycles of $\delta_2$ such that if $\eta$ maps a cycle $(a_1...a_k)$ to $(b_1...b_k)$ then $\alpha_{a_1}\alpha_{a_2}...\alpha_{a_k} \sim \beta_{b_1}\beta_{b_2}...\beta_{b_k}$. \end{theorem}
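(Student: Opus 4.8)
The plan is to make the group operation in $P_n=S_n\wr S_4$ explicit and then reduce an arbitrary conjugation to two elementary cases. Reading off the action described in the text, one checks that multiplication is given by
\[
(\alpha_1,\alpha_2,\alpha_3,\alpha_4;\delta)(\beta_1,\beta_2,\beta_3,\beta_4;\gamma)=(\mu_1,\mu_2,\mu_3,\mu_4;\delta\gamma),\qquad \mu_m=\alpha_m\beta_{m\delta},
\]
so that the $S_4$-parts compose on the right and $(\alpha_1,\ldots,\alpha_4;\delta)^{-1}=(\alpha_{1\delta^{-1}}^{-1},\ldots,\alpha_{4\delta^{-1}}^{-1};\delta^{-1})$. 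Since any conjugator factors as $(\rho_1,\ldots,\rho_4;\gamma)=(\rho_1,\ldots,\rho_4;\mathrm{id})\,(\epsilon,\ldots,\epsilon;\gamma)$, it suffices to understand conjugation by a \emph{pure base} element $(\rho_1,\ldots,\rho_4;\mathrm{id})$ and by a \emph{pure top} element $(\epsilon,\ldots,\epsilon;\gamma)$ separately.

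The central computation is to record what each of these does to the pair consisting of the cycle structure of $\delta$ and the cycle products $\alpha_{a_1}\cdots\alpha_{a_k}$. A direct application of the multiplication rule shows that conjugating $\sigma=(\alpha_1,\ldots,\alpha_4;\delta)$ by $(\epsilon,\ldots,\epsilon;\gamma)$ produces $(\alpha_{1\gamma^{-1}},\ldots,\alpha_{4\gamma^{-1}};\gamma^{-1}\delta\gamma)$: the $\delta$-part is replaced by its $S_4$-conjugate, a cycle $(a_1\ldots a_k)$ is carried to $(a_1\gamma\ldots a_k\gamma)$, and the associated cycle product is left \emph{unchanged}. Conjugating $\sigma$ by $(\rho_1,\ldots,\rho_4;\mathrm{id})$ instead fixes $\delta$ and replaces each entry $\alpha_m$ by $\rho_m^{-1}\alpha_m\rho_{m\delta}$; telescoping around a cycle $(a_1\ldots a_k)$ then turns the cycle product into $\rho_{a_1}^{-1}(\alpha_{a_1}\cdots\alpha_{a_k})\rho_{a_1}$, an $S_n$-conjugate of the original. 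These two formulas are the engine of the whole argument.

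Granting them, the forward direction is immediate: if $\sigma_2=\tau^{-1}\sigma_1\tau$ with $\tau=(\rho_1,\ldots,\rho_4;\gamma)$, combining the two cases gives $\delta_2=\gamma^{-1}\delta_1\gamma$, and the map $\eta$ sending a cycle $(a_1\ldots a_k)$ of $\delta_1$ to $(a_1\gamma\ldots a_k\gamma)$ of $\delta_2$ is a length-preserving bijection for which $\beta_{a_1\gamma}\cdots\beta_{a_k\gamma}=\rho_{a_1}^{-1}(\alpha_{a_1}\cdots\alpha_{a_k})\rho_{a_1}\sim\alpha_{a_1}\cdots\alpha_{a_k}$, which is the required condition (the equivalence of ``length-preserving cycle bijection'' with ``$\delta_1\sim\delta_2$ in $S_4$'' is exactly Lemma~2.1, used here implicitly through the explicit $\gamma$).

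For the converse I would build $\tau$ from the given $\eta$ in two stages. First, defining $\gamma\in S_4$ by $a_i\gamma=b_i$ on each cycle that $\eta$ maps from $(a_1\ldots a_k)$ to $(b_1\ldots b_k)$ yields a permutation with $\gamma^{-1}\delta_1\gamma=\delta_2$; conjugating $\sigma_1$ by $(\epsilon,\ldots,\epsilon;\gamma)$ gives an intermediate element $\sigma_1'$ with $\delta$-part $\delta_2$ whose cycle product on $(b_1\ldots b_k)$ is exactly $\alpha_{a_1}\cdots\alpha_{a_k}$. It then remains to reach $\sigma_2$ by a pure base element, i.e. to solve $\beta_m=\rho_m^{-1}\alpha'_m\rho_{m\delta_2}$ for the $\rho_m$. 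Read as a recurrence $\rho_{m\delta_2}=(\alpha'_m)^{-1}\rho_m\beta_m$ along each cycle, this determines every $\rho_{b_i}$ from the single free value $\rho_{b_1}$, and closing the cycle once leaves exactly one consistency condition, which simplifies to $\rho_{b_1}^{-1}(\alpha_{a_1}\cdots\alpha_{a_k})\rho_{b_1}=\beta_{b_1}\cdots\beta_{b_k}$. I expect this last point to be the main obstacle: one must verify that the freedom in choosing the leading value $\rho_{b_1}$ is matched precisely by the single obstruction coming from traversing the cycle, so that the hypothesis $\alpha_{a_1}\cdots\alpha_{a_k}\sim\beta_{b_1}\cdots\beta_{b_k}$ makes the recurrence globally solvable; with such $\rho_m$ in hand, $\tau=(\epsilon,\ldots,\epsilon;\gamma)(\rho_1,\ldots,\rho_4;\mathrm{id})$ conjugates $\sigma_1$ to $\sigma_2$.
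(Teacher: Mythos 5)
Your proposal is correct, and its forward direction is essentially the paper's: the paper too factors the conjugator $\tau=\tau_1\tau_2$ into a pure base part and a pure top part (its Case II), and rests on the same two computations — the telescoping identity $\beta_{a_1}\cdots\beta_{a_k}=\gamma_{a_1}^{-1}\alpha_{a_1}\cdots\alpha_{a_k}\gamma_{a_1}$ for base conjugation and the cycle relabelling $(a_1\ldots a_k)\mapsto(a_1\delta_3\,\ldots\,a_k\delta_3)$ with unchanged cycle products for top conjugation. Where you genuinely diverge is the converse. The paper exhausts the cycle structures of $S_4$, reducing (somewhat informally, via ``similar proofs'' and the assumption that $\eta$ is the identity) to $\delta_1\in\{\epsilon,(1\,2),(1\,2\,3),(1\,2\,3\,4),(1\,2)(3\,4)\}$ and writing down an explicit conjugator in each case, e.g.\ $\tau=(r,\alpha_1^{-1}r\beta_1,s,t;\epsilon)$ for $\delta_1=(1\,2)$. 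You instead give a uniform construction: extract $\gamma$ from $\eta$, conjugate by the pure top element, then solve the recurrence $\rho_{m\delta_2}=(\alpha'_m)^{-1}\rho_m\beta_m$ along each cycle, with one free value $\rho_{b_1}$ per cycle whose single closing constraint $\rho_{b_1}^{-1}(\alpha_{a_1}\cdots\alpha_{a_k})\rho_{b_1}=\beta_{b_1}\cdots\beta_{b_k}$ is solvable precisely by the hypothesis — the ``main obstacle'' you flag is in fact already dispatched by your own telescoping formula, since unrolling the recurrence around a cycle produces exactly that one constraint and distinct cycles are independent (the paper's explicit $\tau$'s, such as the second coordinate $\alpha_1^{-1}r\beta_1$, are just your recurrence instantiated at $\rho_1=r$). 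Your route buys rigour on the WLOG steps the paper waves through (arbitrary $\eta$, arbitrary representatives of each cycle structure) and works verbatim in $S_n\wr S_{d+1}$ for any $d$, which the paper's concluding remarks list as the open direction; the paper's case analysis buys the concrete conjugating elements that feed its Section 3 table. One sentence worth adding in a final write-up: the condition is well posed because cyclic rotation of a cycle replaces its cycle product by an $S_n$-conjugate (as $xy\sim yx$), so nothing depends on where each cycle is started.
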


\begin{proof} Suppose $\sigma_1$ is conjugate to $\sigma_2$ in $S_n\wr S_4$. Then there is $\tau = (\gamma_1,\gamma_2,\gamma_3,\gamma_4;\delta_3) \in S_n\wr S_4$ such that $\sigma_2 = \tau^{-1}\sigma_1\tau$.

\underline{Case I:} $\delta_3 = \epsilon$.
If $\delta_1 = \epsilon $ then $\sigma_2 = \tau^{-1}\sigma_1\tau$ = $(\gamma_1^{-1},\gamma_2^{-1},\gamma_3^{-1},\gamma_4^{-1};\epsilon)(\alpha_1,\alpha_2,\alpha_3,\alpha_4;\epsilon)(\gamma_1,\gamma_2,\gamma_3,\gamma_4;\epsilon)$. That is $(\beta_1,\beta_2,\beta_3,\beta_4;\delta_2) = (\gamma_1^{-1}\alpha_1\gamma_1,\gamma_2^{-1}\alpha_2\gamma_2,\gamma_3^{-1}\alpha_3\gamma_3,\gamma_4^{-1}\alpha_4\gamma_4;\epsilon)$. Therefore $\delta_2 = \epsilon$. Hence we take $\eta$ to be the map which assigns each 1-cycle to itself. Now $\eta$ preserves the length of cycles of $\delta_1$. Also we observe $\beta_i = \gamma_i^{-1}\alpha_i\gamma_i $ for $1 \le i \le 4$. Thus, $\beta_i \sim \alpha_i$ for $1 \le i \le 4$. If $\delta_1 \ne \epsilon $ then it has non-fixed points. $\sigma_2 = \tau^{-1}\sigma_1\tau$ = $(\gamma_1^{-1},\gamma_2^{-1},\gamma_3^{-1},\gamma_4^{-1};\epsilon)(\alpha_1,\alpha_2,\alpha_3,\alpha_4;\delta_1)(\gamma_1,\gamma_2,\gamma_3,\gamma_4;\epsilon)$. That is $(\beta_1,\beta_2,\beta_3,\beta_4;\delta_2) = (\gamma_1^{-1}\alpha_1\gamma_{1\delta_1 },\gamma_2^{-1}\alpha_2\gamma_{2\delta_1 },\gamma_3^{-1}\alpha_3\gamma_{3\delta_1 },\gamma_4^{-1}\alpha_4\gamma_{4\delta_1};\delta_1)$. Therefore $\delta_2 = \delta_1$. Hence we take $\eta$ to be the map which assigns each cycle to itself. Now $\eta$ preserves the length of cycles of $\delta_1$. Also we observe $\beta_i = \gamma_i^{-1}\alpha_i\gamma_{i\delta_1}$ for $1 \le i \le 4$. If $i$ is a fixed point of $\delta$ then $\beta_i = \gamma_i^{-1}\alpha_i\gamma_i$. Hence, $\beta_i \sim \alpha_i$. If $i$ is a non-fixed point, then there is cycle $(a_1 a_2 ... a_k)$ of $\delta_1$ such that $i \in (a_1 a_2 ... a_k)$. Thus, $\beta_{a_j} = \gamma_{a_j}\alpha_{a_j}\gamma_{a_{j+1}}^{-1}$ for $1 \le j \le k-1$. When $j = k$ we have $\beta_{a_k} = \gamma_{a_k}\alpha_{a_k}\gamma_{a_1}^{-1}$. Therefore $\beta_{b_1}\beta_{b_2}...\beta_{b_k} = \gamma_{a_1}^{-1}\alpha_{a_1}\alpha_{a_2}...\alpha_{a_k}\gamma_{a_1}$. Thus, $\alpha_{a_1}\alpha_{a_2}...\alpha_{a_k} \sim \beta_{b_1}\beta_{b_2}...\beta_{b_k}$.

\underline{Case II:} $\delta_3 \ne \epsilon$.
Now we write $\tau = \tau_1\tau_2$ where $\tau_1 = (\gamma_1,\gamma_2,\gamma_3,\gamma_4;\epsilon)$ and $\tau_2 = (\epsilon,\epsilon,\epsilon,\epsilon;\delta_3)$. Now observe that, $\sigma_2 = \tau^{-1}\sigma_1\tau = (\tau_1\tau_2)^{-1}\sigma_1(\tau_1\tau_2) = \tau_2^{-1}(\tau_1^{-1}\sigma_1\tau_1)\tau_2$. Therefore, $\tau_1^{-1}\sigma_1\tau_1$ is conjugated by $\tau_2$. Hence say $\tau_1^{-1}\sigma_1\tau_1 = (\mu_1,\mu_2,\mu_3,\mu;\delta_1)$. Now, $(\mu_1,\mu_2,\mu_3.\mu;\delta_1)$ can be found by applying \underline{Case I} of this proof. Now, we obtain $\sigma_2 = (\epsilon,\epsilon,\epsilon\epsilon;\delta_3^{-1})(\mu_1,\mu_2,\mu_3,\mu;\delta_1)(\epsilon,\epsilon,\epsilon,\epsilon;\delta_3)$ = $(\mu_{1\delta_3^{-1}},\mu_{2\delta_3^{-1}},\mu_{3}\delta_3^{-1},\mu_{4\delta_3^{-1}};\delta_3^{-1}\delta_1\delta_3)$. Since $\delta_2 = \delta_3^{-1}\delta_1\delta_3$ we take $\eta$ to be the map which assigns each cycle $(a_1,a_2,...,a_3)$ to $((a_1\delta_3)(a_2\delta_3),...(a_k\delta_3))$. Hence it is clear that $\eta$ preserves the lengths. It can be observed that $\eta$ also agrees the condition  $\alpha_{a_1}\alpha_{a_2}...\alpha_{a_k} \sim \beta_{b_1}\beta_{b_2}...\beta_{b_k}$.

Conversely suppose the existence of $\eta$ satisfying the given condition. We know that $S_4$ has permutations of cycle structures $1^4,3.1,2.1^2,2^2$ and $4$. Given a cycle structure, all the permutations in $S_4$ with that cycle structure results similar proofs. Hence we only prove the results for the cases $\delta_1 = (1 2), (1 2 3), (1 2 3 4), (1 2)(3 4)$. ($\delta_1 = \epsilon$ case is trivial.) Also we assume $\eta$ to be the identity to omit the repetition of similar cases.

If $\delta_1 = (1 2)$ then, $\alpha_1\alpha_2 \sim \beta_1 \beta_2$, $\alpha_3 \sim \beta_3$ and $\alpha_4 \sim \beta_4$. Thus, there exist $r,s,t \in S_n$ such that $r^{-1}\alpha_1\alpha_2r = \beta_1 \beta_2$, $s^{-1}\alpha_3s = \beta_3$ and $t^{-1}\alpha_4t = \beta_4$. Choose $\tau \in S_n$ to be $(r,\alpha_1^{-1}r\beta_1,s,t;\epsilon)$. Then $\tau^{-1}\sigma_1\tau$ = $(\beta_1,\beta_2,\beta_3,\beta_4;(1 2))$ = $\sigma_2$. If $\delta_1 = (1 2 3)$ then, $\alpha_1\alpha_2\alpha_3 \sim \beta_1 \beta_2\beta_3$ and $\alpha_4 \sim \beta_4$. Thus, there exist $r,s \in S_n$ such that $r^{-1}\alpha_1\alpha_2\alpha_3r = \beta_1 \beta_2\beta_3$ and $s^{-1}\alpha_4s = \beta_4$. Choose $\tau \in S_n$ to be $(r,\alpha_1^{-1}r\beta_1,\alpha_3r\beta_3^{-1},s;\epsilon)$. Then $\tau^{-1}\sigma_1\tau$ = $(\beta_1,\beta_2,\beta_3,\beta_4;(1 2 3))$ = $\sigma_2$. If $\delta_1 = (1 2 3 4)$ then, $\alpha_1\alpha_2\alpha_3\alpha_4 \sim \beta_1 \beta_2\beta_3\beta_4$ . Thus, there exists $r \in S_n$ such that $r^{-1}\alpha_1\alpha_2\alpha_3\alpha_4r = \beta_1 \beta_2\beta_3\beta_4$. Choose $\tau \in S_n$ to be $(r,\alpha_1^{-1}r\beta_1,\alpha_2^{-1}\alpha_1^{-1}r\beta_1\beta_2,\alpha_4r\beta_4^{-1};\epsilon)$. Then $\tau^{-1}\sigma_1\tau$ = $(\beta_1,\beta_2,\beta_3,\beta_4;(1 2 3 4))$ = $\sigma_2$. If $\delta_1 = (1 2)(3 4)$ then, $\alpha_1\alpha_2 \sim \beta_1 \beta_2$ and $\alpha_3\alpha_4 \sim \beta_3\beta_4$. Thus, there exist $r,s \in S_n$ such that $r^{-1}\alpha_1\alpha_2r = \beta_1 \beta_2$ and $s^{-1}\alpha_3\alpha_4s = \beta_4$. Choose $\tau \in S_n$ to be $(r,\alpha_1^{-1}r\beta_1,s,\alpha_3^{-1}s\beta_3;\epsilon)$. Hence we obtain
$\tau^{-1}\sigma_1\tau$ = $(\beta_1,\beta_2,\beta_3,\beta_4;(1 2)(3 4))$ = $\sigma_2$. Therefore in either case, $\sigma_1 \sim \sigma_2$ in $S_n \wr S_4$. \end{proof}

\section{The Classification}

Suppose $\sigma_1 = (\alpha_1,\alpha_2,\alpha_3,\alpha_4;\delta_1)$ is an autoparatopism of Latin cube C. Then by theorem 2.3 we are able to confirm the cases for $\sigma_2$ in the following table are conjugate to $\sigma_1$. Hence those become autoparatopisms of the Latin cube $C^{\tau}$ by Lemma 2.2, where $\tau \in S_n \wr S_4$ is the appropriate conjugating element in each case.

\begin{center}
   \begin{tabular}{|c|c|c|}
   \hline
   $\delta_1$  & $\eta$ & $\sigma_2$ \\
   \hline \hline
   $\epsilon$  &identity map& $(\alpha_1,\alpha_2,\alpha_3,\alpha_4;\epsilon)$ \\
   \hline
   $(1 2)$&identity map& $(\epsilon,\alpha_1\alpha_2,\alpha_3,\alpha_4;(1 2))$ \\
   $(1 3)$&$(1 3)\rightarrow(1 2)$,$(2)\rightarrow(3)$, $(4)\rightarrow(4)$
   & $(\epsilon,\alpha_1\alpha_3,\alpha_2,\alpha_4;(1 2))$ \\
   $(1 4)$&$(1 4)\rightarrow(1 2)$,$(2)\rightarrow(3)$, $(3)\rightarrow(4)$
   & $(\epsilon,\alpha_1\alpha_4,\alpha_2,\alpha_3;(1 2))$ \\
   $(2 3)$&$(2 3)\rightarrow(1 2)$,$(1)\rightarrow(3)$, $(4)\rightarrow(4)$
   & $(\epsilon,\alpha_2\alpha_3,\alpha_1,\alpha_4;(1 2))$ \\
   $(2 4)$&$(2 4)\rightarrow(1 2)$,$(1)\rightarrow(3)$, $(3)\rightarrow(4)$
   & $(\epsilon,\alpha_2\alpha_4,\alpha_1,\alpha_3;(1 2))$ \\
   $(3 4)$&$(3 4)\rightarrow(1 2)$,$(1)\rightarrow(3)$, $(2)\rightarrow(4)$
   & $(\epsilon,\alpha_3\alpha_4,\alpha_1,\alpha_2;(1 2))$ \\
  \hline
   $(1 2 3)$&identity map& $(\epsilon,\epsilon,\alpha_1\alpha_2\alpha_3,\alpha_4;(1 2 3))$ \\
   $(1 3 2)$&$(1 3 2)\rightarrow(1 2 3),(4)\rightarrow(4)$& $(\epsilon,\epsilon,\alpha_1\alpha_3\alpha_2,\alpha_4;(1 2 3))$ \\
   $(1 2 4)$&$(1 2 4)\rightarrow(1 2 3),(3)\rightarrow(4)$& $(\epsilon,\epsilon,\alpha_1\alpha_2\alpha_4,\alpha_3;(1 2 3))$ \\
   $(1 4 2)$&$(1 4 2)\rightarrow(1 2 3),(3)\rightarrow(4)$& $(\epsilon,\epsilon,\alpha_1\alpha_4\alpha_2,\alpha_3;(1 2 3))$ \\
   $(1 3 4)$&$(1 3 4)\rightarrow(1 2 3),(2)\rightarrow(4)$& $(\epsilon,\epsilon,\alpha_1\alpha_3\alpha_4,\alpha_2;(1 2 3))$ \\
   $(1 4 3)$&$(1 4 3)\rightarrow(1 2 3),(2)\rightarrow(4)$& $(\epsilon,\epsilon,\alpha_1\alpha_4\alpha_3,\alpha_2;(1 2 3))$ \\
   $(2 3 4)$&$(2 3 4)\rightarrow(1 2 3),(1)\rightarrow(4)$& $(\epsilon,\epsilon,\alpha_2\alpha_3\alpha_4,\alpha_1;(1 2 3))$ \\
   $(2 4 3)$&$(2 3 4)\rightarrow(1 2 3),(1)\rightarrow(4)$& $(\epsilon,\epsilon,\alpha_2\alpha_4\alpha_3,\alpha_1;(1 2 3))$ \\
  \hline
   $(1 2 3 4)$&identity map& $(\epsilon,\epsilon,\epsilon,\alpha_1\alpha_2\alpha_3\alpha_4;(1 2 3 4))$ \\
   $(1 2 4 3)$&$(1 2 4 3)\rightarrow(1 2 3 4)$& $(\epsilon,\epsilon,\epsilon,\alpha_1\alpha_2\alpha_4\alpha_3;(1 2 3 4))$ \\
   $(1 3 2 4)$&$(1 3 2 4)\rightarrow(1 2 3 4)$& $(\epsilon,\epsilon,\epsilon,\alpha_1\alpha_3\alpha_2\alpha_4;(1 2 3 4))$ \\
   $(1 3 4 2)$&$(1 3 4 2)\rightarrow(1 2 3 4)$& $(\epsilon,\epsilon,\epsilon,\alpha_1\alpha_3\alpha_4\alpha_2;(1 2 3 4))$ \\
   $(1 4 3 2)$&$(1 4 3 2)\rightarrow(1 2 3 4)$& $(\epsilon,\epsilon,\epsilon,\alpha_1\alpha_4\alpha_3\alpha_2;(1 2 3 4))$ \\
   $(1 4 2 3)$&$(1 4 2 3)\rightarrow(1 2 3 4)$& $(\epsilon,\epsilon,\epsilon,\alpha_1\alpha_4\alpha_2\alpha_3;(1 2 3 4))$ \\
   \hline
   $(1 3)(2 4)$&identity map& $(\epsilon,\epsilon,\alpha_1\alpha_3,\alpha_2\alpha_4;(1 3)(2 4))$ \\
   $(1 2)(3 4)$&$(1 2)\rightarrow(1 3),(3 4)\rightarrow(2 4)$& $(\epsilon,\epsilon,\alpha_1\alpha_2,\alpha_3\alpha_4;(1 3)(2 4))$ \\
   $(1 4)(2 3)$&$(1 4)\rightarrow(1 3),(2 3)\rightarrow(2 4)$& $(\epsilon,\epsilon,\alpha_1\alpha_4,\alpha_2\alpha_3;(1 3)(2 4))$ \\
   \hline
   \end{tabular}
\end{center}

Therefore, it is sufficient to determine the autoparatopisms of the forms $(\beta_1,\beta_2,\beta_3,\beta_4;\epsilon)$, $(\epsilon,\beta_2,\beta_3,\beta_4;(1 2))$, $(\epsilon,\epsilon,\beta_3,\beta_4;(1 2 3))$,$(\epsilon,\epsilon,\epsilon,\beta_4;(1 2 3 4))$ and $(\epsilon,\epsilon,\beta_3,\beta_4;(1 3)(2 4))$. Then by conjugating autoparatopisms in the each form by every element in the group $S_n \wr S_4$, we are able determine all the autoparatopisms. 

\section{Concluding Remarks}

The whole discussion of classifying autoparatopisms on Latin cubes depends on conjugates of autoparatopisms being conjugates and cycle structure of the $\delta$ in the element $\sigma = (\alpha_1,\alpha_2,\alpha_3,\alpha_4;\delta)$ of $S_n \wr S_4$. These concepts can be carefully extended to a hypercube of any dimension d. The objective of determining the cycle structures of autoparatopisms given a hypercube of order n and dimension d is still an open problem.

\section{Acknowledgements}

I would like to express my gratitude towards Dr. Mrs. M.J.L. Mendis for introducing me the concept of the autoparatopims of Latin squares, which led me to generalize the concept for cubes.

\bibliographystyle{unsrt}  
\bibliography{references} 

\end{document}